\def\NAT@spacechar{~}
\crefname{figure}{Figure}{Figures}
\Crefname{figure}{Figure}{Figures}
\newtheorem{definition}{Definition}[section]
\newtheorem{claim}{Claim}
\newtheorem{theorem}[definition]{Theorem}
\newtheorem{lemma}[definition]{Lemma}
\newenvironment{claimproof}{%
\let\origqed=\qedsymbol%
\renewcommand{\qedsymbol}{$\blacktriangleleft$}%
\begin{proof}}{\end{proof}\let\qedsymbol=\origqed}
\numberwithin{equation}{section}
\numberwithin{equation}{section}
\newcommand{\comment}[1]{}
\newcommand{\cA}{\mathcal{A}}
\newcommand{\cB}{\mathcal{B}}
\newcommand{\cC}{\mathcal{C}}
\newcommand{\cF}{\mathcal{F}}
\renewcommand{\epsilon}{\varepsilon}
\newcommand{\COMMENT}[1]{}
\renewcommand{\COMMENT}[1]{\footnote{\textcolor{blue!70!black}{#1}}} 
\title{A Canonical Polynomial Van Der Waerden's Theorem}
\author[A.~Gir\~{a}o]{Ant\'onio Gir\~{a}o}
\email{giraoa@bham.ac.uk}
\address{School of Mathematics, University of Birmingham, 
Edgbaston, Birmingham, B15 2TT, United Kingdom.}
\thanks{The research leading to these results was partially supported by the EPSRC, grant no. EP/N019504/1.}
\date{\today}
\begin{document}
\onehalfspacing
\maketitle
\begin{abstract}
    We prove a canonical polynomial Van der Waerden's Theorem. More precisely, we show the following. Let $\{p_1(x),\ldots,p_k(x)\}$ be a set of polynomials such that $p_i(x)\in \mathbb{Z}[x]$ and $p_i(0)=0$, for every $i\in \{1,\ldots,k\}$. Then, in any colouring of $\mathbb{Z}$, there exist $a,d\in \mathbb{Z}$ such that $\{a+p_1(d),\ldots,a+p_{k}(d)\}$ forms either a monochromatic or a rainbow set. 
\end{abstract}

\section{Introduction}
Arithmetic Ramsey theory is a branch of combinatorics where one is interested in studying the existence of monochromatic structures in any finite colouring of the integers. A well known theorem in the area due to Van der Waerden \cite{VanDerWaerden} and dating to $1927$ states that in any finite colouring of the natural numbers there exist arbitrarily long monochromatic arithmetic progressions. This theorem has been considerably extended over the years and we emphasize some important extensions. 

A classical result of Rado \cite{Rado} characterizes all integer valued matrix $M$ with the property that in any finite colouring of the naturals there exists a monochromatic solution to $M\cdot \vec{x}=0$. Observe that a solution to the system of linear equations consisting of $x_1-2x_2+x_3=0$, $x_2-2x_3+x_4=0, \ldots, x_{k-2}- 2x_{k-1}+ x_k=0$ forms a $k$-term arithmetic progression. Since such a system is easily seen to satisfy Rado's characterization, Van der Waerden's Theorem follows as a special case of Rado's result. 
Another nice generalisation is the Gallai-Witt's Theorem (see \cite{Grahamrothschildspencer},\cite{Witt}) which states that for any finite subset $A\subset \mathbb{Z}^{n}$, any finite colouring of $\mathbb{Z}^{n}$ contains a monochromatic homothetic copy of $A$. This theorem can be viewed as a multidimensional generalisation of Van der Waerden's Theorem. 

Most ramsey-theoretical results (finite colourings) have a canonical version. In this setting, the palette of colours may be infinite but one still would like to characterize all unavoidable sub-structures. For example, the canonical Van der Waerden's Theorem, first proved by Erd\H{o}s and Graham \cite{ErdosGraham}, says the following.

\begin{theorem}[Canonical Van der Waerden]
Whenever $\mathbb{N}$ is coloured, possibly with infinitely many colours, there exist either arbitrarily long monochromatic arithmetic progressions or arbitrary long rainbow arithmetic progressions.
\end{theorem}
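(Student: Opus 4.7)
The plan is to reduce the canonical statement to the finite (multi-colour) Van der Waerden theorem via a colouring-of-types trick on pairs $(a,d)$, as in the original argument of Erdős and Graham. Fix $k$ and a colouring $c:\mathbb{N}\to C$, and assume there is no monochromatic $k$-term AP; the aim is to produce a rainbow $k$-AP. To each pair $(a,d)\in\mathbb{N}\times\mathbb{N}_{>0}$ I would associate the \emph{type} $\tau(a,d)$, namely the equivalence relation on $[k]$ in which $i\sim j$ iff $c(a+(i-1)d)=c(a+(j-1)d)$. Since we have no monochromatic $k$-AP, $\tau$ never attains the one-block equivalence, so $\tau$ is a finite colouring of the lattice $\mathbb{N}\times\mathbb{N}_{>0}$ by at most $B_k-1$ classes, where $B_k$ is the $k$-th Bell number.

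The next step is to apply Gallai-Witt (the multidimensional Van der Waerden theorem) to the finite colouring $\tau$ in order to extract a monochromatic homothetic copy of a suitable two-dimensional configuration. The configuration should be designed so that $\tau$-monochromaticity on it either forces the discrete type on some AP (immediately yielding a rainbow $k$-AP), or else forces colour-equalities between many overlapping APs sharing a common non-trivial type, which can then be stitched into a monochromatic $k$-AP, contradicting the assumption. Already for $k=3$ one sees the mechanism: the type $c(a)=c(a+d)\ne c(a+2d)$ cannot be shared by $(a,d)$ and $(a+d,d)$, for the two together imply $c(a)=c(a+d)=c(a+2d)$, a monochromatic $3$-AP.

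I expect the main obstacle to lie in the combinatorial design of the configuration fed into Gallai-Witt, so that the above dichotomy holds uniformly across every non-trivial partition of $[k]$. The delicate task is to show, for each such partition, that a suitable local overlap of APs sharing that type forces either a monochromatic or a rainbow $k$-AP; this reduces to a case analysis on partitions of $[k]$ that is cleanest to package as an iterated application of the finite Van der Waerden theorem with $B_k-1$ colours, where at each iteration one kills off one additional non-discrete type at the cost of enlarging the ambient AP.
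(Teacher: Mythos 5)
Your proposal takes a genuinely different route from the paper's. The paper proves a strengthened statement about $m$-type colourings, inducts on the rainbow length $t$, partitions $[N]$ into consecutive blocks of a fixed length, treats each block as a single element with a concatenated colour (raising $m$), and then, once it has a rainbow progression of blocks, extends it by one extra element via a nested construction that repeatedly invokes the finite Van der Waerden theorem inside successively thinner sub-progressions. Your approach is instead the Erd\H{o}s--Graham / Gallai-type argument: colour the pair $(a,d)$ by the partition $\tau(a,d)$ of $[k]$ recording colour coincidences, exclude the one-block type, and apply a multidimensional Van der Waerden theorem to the resulting finite colouring of $\mathbb{N}\times\mathbb{N}_{>0}$. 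These are legitimately distinct decompositions, and the $k=3$ computation you give for the type $\{1,2\}\{3\}$ correctly illustrates the stitching mechanism.

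However, the crux of your argument is left as a hope rather than a proof, and there is a concrete obstacle in it that needs to be addressed. Suppose the offending type $\pi$ has $i<j$ in the same block. The stitching $c(a+(i-1)d)=c(a+(j-1)d)$, $c(a'+(i-1)d)=c(a'+(j-1)d)$ chains into a longer monochromatic progression only when $a'=a+(j-i)d$; i.e.\ you need a row of pairs $(a_t,d)$ with fixed second coordinate $d$ and first coordinates in arithmetic progression with common difference exactly $(j-i)d$. But a monochromatic homothetic copy $v+\lambda S$ supplied by Gallai--Witt with, say, $S=\{(t(j-i),1):0\le t<k\}$ gives $a_t=v_1+\lambda t(j-i)$ and $d=v_2+\lambda$, so the spacing of the $a_t$ is $\lambda(j-i)$, which equals $(j-i)d$ only if $v_2=0$ --- a constraint Gallai--Witt does not grant you. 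Replacing the configuration by a large grid merely trades this for a divisibility condition $\lambda\mid(j-i)v_2$ that is still outside your control. So the statement ``the configuration should be designed so that $\tau$-monochromaticity forces the dichotomy'' is not yet substantiated, and the naive configurations do not work. To turn this sketch into a proof you would need to either modify the type colouring (e.g.\ encode more data than just the partition, such as the residues of $a$ and $d$ to suitable moduli, so that the Gallai--Witt parameters $v_2$ and $\lambda$ can be forced into the right congruence class), or replace the single Gallai--Witt step by a carefully organised iteration of one-dimensional Van der Waerden in which the common difference is propagated explicitly. As it stands the pivotal combinatorial step is missing, and it is precisely the hard part of the Erd\H{o}s--Graham argument.
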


Note that both the Gallai-Witt and Rado's classical theorems have canonical versions. Indeed, the canonical Gallai-Witt's Theorem was originally proved by Deuber, Graham, Pr\"omel and Voigt \cite{Deubergrahampromvoigt} and it was later slightly simplified by Pr\"omel and R\"odl \cite{ProRodl}. Rado's canonical version was proved by Lefmann \cite{Lefmann}.

A more recent and remarkable theorem in arithmetic Ramsey theory which once again extends Van der Waerden's Theorem is the polynomial Van der Waerden's Theorem, originally proved by Bergelson and Liebman \cite{BergLeib}. Their proof uses heavy ergodic theory machinery, however, few years later, a beautiful and purely combinatorial proof was found by Walters \cite{walters}. 

\begin{theorem}[Polynomial Van der Waerden]\label{thm: polyVan}
Let $p_1,p_2,\ldots, p_{k} \in \mathbb{Z}[x]$, where for every $i\in [k]$, $p_{i}(0)=0$ and let $n\in \mathbb{Z}$. Then, there exists $N'\in \mathbb{N}$ such that for every colouring of $\{1,\ldots, N'\}$ with $n$ colours there exist $a,d \in \mathbb{Z}$ such that $\{a,a+p_1(d),\ldots, a+p_{k}(d)\}\subset [N']$ forms a monochromatic set. 
\end{theorem}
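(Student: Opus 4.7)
The plan is to follow Walters' combinatorial route, avoiding the ergodic machinery of Bergelson--Leibman. Throughout I will work with the augmented family $\cP = \{p_0, p_1,\ldots, p_k\}$ where $p_0 \equiv 0$, so that the target configuration is exactly $\{a+p_i(d) : 0 \le i \le k\}$. The proof will proceed by an outer induction on a well-founded ordering $\prec$ of finite families of polynomials in $\Z[x]$ vanishing at $0$, with an inner colour-focusing argument carrying each step. Given $\cP$ and $n$, the desired $N' = N'(\cP,n)$ will be constructed explicitly from the bounds delivered by the inductive hypothesis.

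The relevant well-order is of \emph{PET type}: say $\cP' \prec \cP$ when the multiset of top-degree leading coefficients of $\cP'$ (or its multiset of degrees, in case $\cP'$ has strictly smaller top degree) is smaller in a suitable lexicographic sense. This is well-founded. The base case, in which every $p_i$ is linear, reduces to the classical Van der Waerden theorem after clearing a common denominator. For the inductive step, fix $\cP$ of top degree $D \ge 2$. The key manipulation is translation: for each integer shift $s$, the family $\cP_s := \{p_i(x+s)-p_i(s)\}_{i}$ again vanishes at $0$ and retains the top-degree coefficients of $\cP$, but pairwise differences $(p_i(x+s)-p_i(s)) - (p_j(x+s)-p_j(s))$ cancel the leading terms of any $p_i,p_j$ sharing a top coefficient. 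Iterating such differencing around a carefully chosen \emph{pivot} polynomial produces families strictly $\prec \cP$, to which the induction hypothesis applies in sufficiently long intervals.

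The inner argument is a Deuber-style colour-focusing chain. One builds $n+1$ common differences $d_1,\ldots,d_{n+1}$ and base points $a_1,\ldots,a_{n+1}$ such that each partial configuration $\{a_\ell + p_i(d_\ell)\}_{i=1}^{k}$ is monochromatic in some colour $\chi_\ell$, while all such partial configurations share a common focus point $a^*$. Each new stage is furnished by applying the inductive hypothesis to the appropriate difference system $\cP_s$ inside a very long interval; pigeonholing over the $n+1$ stage-colours then forces either two of the $\chi_\ell$ to coincide, or one of them to match the colour of $a^*$. In either case, the coincidence assembles a full monochromatic copy of the configuration $\{a, a+p_1(d),\ldots,a+p_k(d)\}$ in the original colouring, and careful tracking of the nested interval sizes yields the explicit bound $N'$.

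The main obstacle will be keeping the PET ordering strictly decreasing through every recursive call: the mixed-degree case (where not all $p_i$ share the top degree) demands a careful choice of pivot in the differencing step, and the ordering $\prec$ must be calibrated so that both ``killing the top coefficient by differencing'' and ``reducing the number of top-degree polynomials'' register as genuine descents. Once this bookkeeping is pinned down, the colour-focusing iteration and the reduction to the linear base case proceed in a routine, if quantitatively unwieldy, fashion.
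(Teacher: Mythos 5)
Note first that this paper does not prove Theorem~\ref{thm: polyVan} at all: it is quoted as a known result of Bergelson and Leibman, with Walters' combinatorial proof cited, and it is used as a black box in the proof of Theorem~\ref{thm: main}. So your proposal is really an attempted reconstruction of Walters' argument rather than something that can be compared with a proof in this paper. As such it is pointed in the right direction (a PET-style induction on a well-founded weight ordering, combined with a colour-focusing inner argument), but it has a genuine gap and leaves the decisive bookkeeping unspecified.

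The gap is in the inner focusing step. You claim that after pigeonholing ``either two of the $\chi_\ell$ coincide, or one of them matches the colour of $a^*$'', and that in either case a full monochromatic copy is assembled. The first alternative gives you nothing: two partial configurations $\{a_\ell+p_i(d_\ell)\}_{i}$ and $\{a_{\ell'}+p_i(d_{\ell'})\}_{i}$ that are monochromatic in the same colour and share the focus $a^*$ do not union to a set of the form $\{a,a+p_1(d),\ldots,a+p_k(d)\}$ for any single $d$. The way Walters (and already the classical colour-focusing proof of Van der Waerden) handles this is to make pairwise distinctness of the colours $\chi_1,\ldots,\chi_r$ part of the inner inductive statement: either a monochromatic configuration exists, or there are $r$ colour-focused configurations with $r$ \emph{distinct} colours; the step from $r$ to $r+1$ uses precisely the dichotomy that the focus's colour either equals one of the existing $r$ colours (done) or is new (extend), and only at $r=n$ does the pigeonhole on the focus close the argument. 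Your version must be reorganised along these lines. In addition, the descent is described too loosely: the family that makes the ordering drop is not the set of translated polynomials $\{p_i(x+s)-p_i(s)\}$ or their pairwise differences, but the differences against a fixed pivot $p_1$ of minimal degree, namely $p_j(x+d)-p_1(x)-p_j(d)$ (together with $p_j-p_1$), taken over the bounded range of shifts $d$ produced by the focusing; and the ordering that registers this as a strict descent is the weight vector counting the number of distinct leading coefficients in each degree, compared from the top degree downward (exactly the ordering recalled in Section~2 of this paper). You flag this calibration as the ``main obstacle'' and defer it, but it, together with the distinct-colour invariant above, is the substance of the proof, so as written the proposal is an outline with a flawed pigeonhole rather than a complete argument.
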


The main purpose of this paper is to give a proof of the following canonical version of the polynomial Van der Waerden's Theorem. We remark that our methods might be useful to show canonical versions of other theorems as well as giving new and shorter proofs of known canonical theorems.

\begin{theorem}[Canonical polynomial Van der Waerden]\label{thm: main}
Let $\cA=\{p_1,\ldots, p_{k}\}$, where $p_i \in \mathbb{Z}[x]$ and $p_{i}(0)=0$, for every $i\in\{1,\ldots, k\}$. Then, for any colouring of $\mathbb{Z}$, there exist $a,d \in \mathbb{Z}$ such that $\{a,a+p_1(d),\ldots, a+p_{k}(d)\}$ either forms a monochromatic set, or $\{a,a+p_1(d),\ldots, a+p_{k}(d)\}$ forms a rainbow set. 
\end{theorem}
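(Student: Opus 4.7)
I would argue by contradiction: suppose $c \colon \mathbb{Z} \to S$ is a colouring for which no $(a,d) \in \mathbb{Z}^2$ produces a monochromatic or rainbow set of the form $\{a, a + p_1(d), \ldots, a + p_k(d)\}$. Writing $p_0 \equiv 0$ throughout, associate to each $(a,d)$ the partition $\pi(a,d)$ of $\{0, 1, \ldots, k\}$ defined by $i \sim j$ if and only if $c(a + p_i(d)) = c(a + p_j(d))$. The hypothesis forces $\pi(a,d)$ to be an \emph{intermediate} partition at every pair---neither the one-block partition nor the all-singletons partition---so $\pi$ is a finite colouring of (a subset of) $\mathbb{Z}^2$, taking at most $B_{k+1}$ values.

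The plan is then to apply the finitary form of \Cref{thm: polyVan} to this induced finite colouring $\pi$. For suitable $d$, this yields many values of $a$ (arranged along a secondary polynomial progression $a + q_1(t), a + q_2(t), \ldots$) for which $\pi(a+q_j(t),d)$ is constant, equal to some fixed intermediate partition $P$. The identifications demanded by the blocks of $P$, of the form $c(a + q_j(t) + p_i(d)) = c(a + q_j(t) + p_{i'}(d))$ for $i \sim_P i'$, may then be chained across the $q_j$-progression. The chaining should allow an induction on $k$ (or on the refinement order on partitions) that successively rules out each intermediate $P$: either the chain closes up into a monochromatic copy of some $\cA'$ obtained from $\cA$ by identifying the appropriate indices, or it produces an auxiliary family of shifts for which a rainbow copy of $\cA$ can be exhibited, in each case contradicting the initial assumption.

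The main obstacle I foresee is the polynomial bookkeeping required when iterating. A single application of \Cref{thm: polyVan} to $\pi$ will generically produce secondary progressions indexed by composed or summed polynomials such as $p_i(p_j(t))$ or $p_i(d) + p_j(d')$, which must continue to vanish at $0$ in order to reapply \Cref{thm: polyVan}. I would expect to handle this either by appealing to a multidimensional polynomial Van der Waerden in the spirit of Bergelson--Leibman, or by an inductive argument that, at each step, replaces the family $\cA$ by a new zero-at-zero family of smaller complexity but containing the relevant identifications forced by $P$. The canonical Ramsey philosophy then suggests that finitely many such passes suffice to eliminate every intermediate partition, leaving only the monochromatic and rainbow extremes permitted---which is exactly the statement of the theorem.
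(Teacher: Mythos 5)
Your outline is aspirational rather than a proof, and the two places you flag as obstacles are precisely where the argument would break. The partition-colouring $\pi(a,d)$ is a natural object, and applying the finitary \Cref{thm: polyVan} to $a \mapsto \pi(a,d)$ for a fixed $d$ does produce a configuration $a', a'+p_1(t),\ldots,a'+p_k(t)$ with a common intermediate partition $P$. But the ``chaining'' step that is supposed to convert this into either a monochromatic or a rainbow copy of $\cA$ is not explained, and I do not believe it goes through in the polynomial setting in the way it does for plain arithmetic progressions. Concretely, constancy of $P$ along the secondary progression gives you a $2$-parameter grid $\{a'+p_i(t)+p_j(d)\}_{i,j}$ in which every ``row'' has the same colour-pattern $P$, but the identifications $c(a'+p_i(t)+p_j(d)) = c(a'+p_i(t)+p_{j'}(d))$ for $j \sim_P j'$ do not link across rows into anything resembling $\{a, a+p_1(e),\ldots,a+p_k(e)\}$ for a single $e$: the sums $p_i(t)+p_j(d)$ are not of the form $p_\ell(e)$, and neither are the compositions $p_i(p_j(\cdot))$ generically in the family $\cA$. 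To iterate, you would need to pass to a genuinely new polynomial family recording these differences/shifts, and it is not at all clear that this family is ``of smaller complexity'' under any naive measure; the paper, following Walters, uses the weight vector (number of distinct leading coefficients per degree) as the well-ordering, and identifying this as the correct induction parameter is a nontrivial step that your sketch does not supply.

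A second, independent gap is in how the rainbow alternative is produced. Your contradiction hypothesis forbids any rainbow $\{a, a+p_1(d),\ldots,a+p_k(d)\}$, which means for every $(a,d)$ some pair of the $k+1$ values shares a colour. Eliminating intermediate partitions one by one via chained identifications can in principle push toward the monochromatic extreme, but building a fully rainbow set requires exhibiting $k+1$ elements whose colours are pairwise distinct, a ``for all pairs'' condition that is not the negation of any single identification you could derive from a fixed $P$. The paper handles this with an inner induction that accumulates a large family of fully-rainbow $\cB$-focused configurations sharing a focus $a$, and extracts the rainbow set from a pigeon-hole argument once the family is large enough; this bookkeeping (the $(m,n)$-type colourings and the norm $\|\cF\|$) is the heart of the canonical upgrade and has no analogue in your proposal. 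In short: the high-level Erd\H{o}s--Graham-style intuition is reasonable, but the chaining and the rainbow extraction are exactly the hard parts, and the sketch does not address either.
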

We remark that our proof of \Cref{thm: main} uses some nice ideas introduced by Walters in \cite{walters}. 

\section{Preliminary definitions and notation}\label{sec: not}
For technical reasons, we will always be considering multi-sets but we will still call them sets. As usual, we denote by $[N]\coloneqq \{1, \ldots, N\}$. We define an \textit{integral} polynomial to be a polynomial with integer coefficients taking the value zero at zero. 
Given a natural number $m$, we say $\Delta: [N] \rightarrow \mathbb{N}^{m}$ is an $m$-type colouring of $[N]$, i.e. a colouring of $[N]$ where each colour $c$ is an element of $\mathbb{N}^{m}$. For an element $a\in [N]$ and $j\in [m]$, we define $\Delta_j(a)$ to be the $j$-th coordinate of $\Delta(a)$. 

Now, let $n,m\in \mathbb{N}$. We say $\Delta: [N] \rightarrow \mathbb{N}^{m}\times \{1,\dots, n\}$ is an $(m,n)$-type colouring of $[N]$. For every set of \textit{distinct} integers $\{a_1,a_2,\ldots, a_k\} \subseteq [N]$, we say it forms a \textit{fully-rainbow} set if the following holds.

\begin{enumerate}[label=$(\mathrm{R\arabic*})$]
    \item\label{def:rainbow} for every $i,i'\in \{1,\ldots, k\}$ and $j,j'\in \{1,\ldots,m\}$, $\Delta_j(a_i)\neq \Delta_{j'}(a_{i'})$,
    \item\label{def:fullyrainbow} there exists $c\in \{1,\ldots, n\}$, such that for every $i \in \{1,\ldots,k\}$, $\Delta_{m+1}(a_i)=c$. 
\end{enumerate} 

We say $\{a_1,\ldots, a_k\}$ (not necessarily distinct) forms a \textit{rainbow} set if it satisfies \ref{def:rainbow}. Finally, we say a set of integers $\{a'_1,\ldots,a'_k\}$ (not necessarily distinct) forms a \textit{monochromatic} set if the following holds.
\begin{enumerate}[label=$(\mathrm{M\arabic*})$]
    \item \label{def:mono} there exists a coordinate $j \in [m]$ such that $\Delta_{j}(a_i)=\Delta_{j}(a_{i'})$, for every $i,i'\in \{1,2,\ldots, k\}$.
\end{enumerate}
Let $\Delta$ be an $(m,n)$-type colouring of $\mathbb{Z}$, and $\cB=\{p'_1,\ldots, p'_{k'}\}$ a set of polynomials. We say that a set of \textit{distinct} integers $A(d)\coloneqq \{a_1,\ldots,a_{k'}\}\subset \mathbb{Z}$ is $\cB$-\textit{focused} at $a\in \mathbb{Z}$ if $a_j-a=p'_j(d)$, for all $j\in \{1,\ldots, k'\}$ and $a\notin \{a_1,\ldots, a_k\}$. Moreover, we say that the sets $A_1(d_1),\ldots, A_q(d_q)$ are \textit{fully-rainbow} $\cB$-focused at $a$, if the following are satisfied.
\begin{enumerate}[label=$(\mathrm{FR\arabic*})$]
    \item \label{def:focus} for all $i\in\{1,\ldots, q\}$, $A_i(d_i)$ is $\cB$-focused at $a$,
    \item \label{def:fr} $A_i(d_i)$ is fully-rainbow,
    \item \label{def: rain} $(\cup_{i=1}^{q}A_i(d_i))$ forms a rainbow set all of whose elements are \textit{distinct}. 
\end{enumerate}
Finally, let $\cF \coloneqq \{A_1(d_1),\ldots,A_q(d_r)\}$ be a collection of fully-rainbow sets $\cB$-focused at $a$. For each $c\in \{1,\ldots,n\}$, let $w_c(\cF)\coloneqq |\{i\mid \Delta_{m+1}(A_i(d_i))=c\}|$. In other words, $w_c(\cF)$ (or $w_c$ whenever $\cF$ is understood from the context) counts the number of fully-rainbow sets $A_i(d_i)\in \cF$ for which $\Delta_{m+1}(A_i(d_i))=c$. For technical reasons, we need to define $g(\cF)\coloneqq \{c\in \{1,\ldots,n\} \mid w_c(\cF)\leq m+1\}$. Now, we let $\|\cF\|\coloneqq \sum_{c\in g(\cF)} w_c$. This is basically an $\ell_1$-norm with a tweak. 

Given $N'\in \mathbb{N}$, we may define the equivalence classes induced by $\Delta$ on intervals of order $N$. Suppose we partition $[N']=I_1\cup \ldots \cup I_{\ell}$ into consecutive intervals of order $N$. Then, we say $I_i\sim_{\Delta} I_j$ if the following hold. We may assume $I_i=I_1=[N]$ and $I_j=[tN]$, for some $t\in \mathbb{N}$.
\begin{enumerate}
    \item for all $i\in \{1,\ldots N\}$, $\Delta_{m+1}(i)=\Delta_{m+1}(tN+i)$,
    \item for all $i,j\in \{1,\ldots, N\}$ and $k\in \{1,\ldots, m\}$, $\Delta_{k}(i)=\Delta_{k}(j)$ if and only if $\Delta_{k}(tN+i)=\Delta_{k}(tN+j)$.
\end{enumerate}
It is easy to see this is indeed an equivalence relation. We denote the set of equivalence classes by $\mathfrak{E}_{N}^{\Delta, N'}$. Crucially, note that for any $m,n, N\in \mathbb{N}$, and any $(m,n)$-type colouring $\Delta$, $\mathfrak{E}_{N}^{\Delta, N'}$ is a finite set and we denote by $f(m,n,N)$, the total number of possible distinct equivalence classes. Also, for an interval $I$, we let $\mathfrak{E}_{N}^{\Delta}(I)$ to be the equivalence class containing $I$. When $N'$ is clear from the context, we omit the superscript $N'$ in the above definitions. 

Following \cite{walters}, let $A=\{p_1,\ldots, p_k\}$ be a set of integral polynomials. Let $D$ be the maximum degree of these polynomials. For $1 \leq i \leq D$, let $N_i(A)$ be the number of distinct leading coefficients of the polynomials in $A$ of degree $i$. We define the \textit{weight vector} $\omega(A)\coloneqq (N_1,\ldots, N_D)$. For any two sets of integral polynomial $A,A'$ we say that $\omega(A) <\omega(A')$ if there exists $r$ such that $N_r(A) <N_r(A')$ and $N_i(A)= N_i(A')$, for every $i> r$. This is easily seen to be a well ordering on the set consisting of all finite sets of integral polynomials. In our proof of \Cref{thm: main}, the `outer' induction will be on the weight vector of $B$.

First, we shall sketch a short proof of the Canonical Van der Waerden's Theorem which makes use of some definitions introduced before.

\section{Proof of the canonical Van Der Waerden's Theorem}

In this section, we give a sketch of a short proof of the Canonical Van der Waerden's Theorem. We hope this will help the reader getting used to some of the terminology and ideas when reading the proof of our main theorem. Our aim in this section is to prove the following result. 
\begin{theorem}

Let $k,t,m\in \mathbb{N}$. Then, there exists $N_0\coloneqq N(k,t,m)\in \mathbb{N}$ such that for every $m$-type colouring $\Delta: [N(k,t,m)] \rightarrow \mathbb{N}^{m}$ one of the two must hold. 
\begin{itemize}
    \item There exists a monochromatic arithmetic progression $A\subseteq [N_0]$ of length $k$ or
    \item there exists a rainbow arithmetic progression $B\subseteq [N_0]$ of length $t$.
\end{itemize}
\end{theorem}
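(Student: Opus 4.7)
I would prove the result by induction on the rainbow length $t$ (with $k$ and $m$ regarded as parameters that we keep track of). The base case $t=1$ is trivial since any singleton is rainbow. For the inductive step, assume $N:=N(k,t-1,m)$ exists, and choose $N' := N\cdot W(k,f(m,N))$, where $W(\cdot,\cdot)$ is the classical Van der Waerden function and $f(m,N)$ is the bound on the number of equivalence classes defined in \Cref{sec: not}.

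Partition $[N'] = I_1 \cup \dots \cup I_{W(k,f(m,N))}$ into consecutive intervals of length $N$. The inductive hypothesis applied inside each $I_j$ yields either a monochromatic $k$-AP (in which case we are done) or a rainbow $(t-1)$-AP, so from now on assume the latter in every block. Now colour each block $I_j$ by its $\sim_\Delta$-equivalence class: this is a colouring of $[W(k,f(m,N))]$ with at most $f(m,N)$ colours, so the classical Van der Waerden's theorem produces an AP $I_{j_1}, \dots, I_{j_k}$ of blocks that are pairwise equivalent. By the definition of $\sim_\Delta$, we can then select a common set of relative positions $P = \{p_1, \dots, p_{t-1}\} \subseteq [N]$ with common difference $d$ such that the translate $R_s := P + (j_s-1)N$ is a rainbow $(t-1)$-AP inside $I_{j_s}$ for every $s \in [k]$.

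The key step is to combine these translates to produce a rainbow $t$-AP. For each relative position $q \in [N]$, the ``column'' $C_q := \{q + (j_s-1)N : s \in [k]\}$ is itself a $k$-term AP. If any column $C_q$ is monochromatic in some coordinate $\Delta_i$, we already have a monochromatic $k$-AP. Otherwise, the columns exhibit enough diversity that taking the rainbow $(t-1)$-AP $R_1$ in $I_{j_1}$ and appending the next AP element (which lives in a later block $I_{j_s}$) can be shown to yield a rainbow $t$-AP: the new element cannot repeat any colour in any coordinate of $R_1$, for otherwise equivalence of the blocks propagates this coincidence to a monochromatic column, contradicting the previous case.

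\textbf{Main obstacle.} The delicate point is the last step: the equivalence $\sim_\Delta$ controls only \emph{intra-block} equalities of colours, not cross-block ones, so naive extension of $R_1$ to a rainbow $t$-AP can fail. To absorb every possible failure pattern one has to choose the length of the AP of equivalent blocks substantially larger than $k$ (so that, via pigeonhole on the finitely many cross-coincidence types, one of the columns becomes mono\-chromatic and the ``done'' case of the dichotomy is triggered). Getting this bookkeeping right -- the correct quantitative blow-up of $N'$ relative to $N$ and $f(m,N)$ -- is the real work; the structural ideas used are the ones summarised in \Cref{sec: not} and foreshadow the focusing/equivalence-class machinery needed for \Cref{thm: main}.
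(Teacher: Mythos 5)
Your decomposition is genuinely different from the paper's, and as written it contains two gaps, one of which you flag and one you do not.

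The paper does \emph{not} apply the inductive hypothesis inside the blocks. Instead it applies the inductive hypothesis (with a much larger $m$) to the \emph{concatenation} colouring $\Delta'$ of the blocks: the colour of a block of length $N'$ is the $(mN')$-tuple listing the colours of all its elements. Under this colouring, a rainbow $t$-AP of blocks is a very strong object: \emph{every} choice of one representative per block is automatically a rainbow set of elements of $[N]$, so there is no cross-block accounting to do at all. The only thing left is to control whether the single new $(t{+}1)$-th element $x$ collides with the earlier ones; this is handled by a cascading application of classical finite-colour Van der Waerden in each block, where one finds either a monochromatic $k$-AP of positions that collide with $x$ (done by monochromaticity) or a long AP of non-colliding positions, and the nested APs are focused so that a valid rainbow $(t{+}1)$-AP of elements survives. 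Your version --- inductive hypothesis inside each block plus classical VdW on an equivalence-class colouring of blocks --- forgoes this automatic cross-block rainbow guarantee, which is precisely why you hit the obstacle you describe. That obstacle is real, and your proposed remedy (``pigeonhole on cross-coincidence types forces a monochromatic column'') does not obviously work: a coincidence $\Delta_a(x)=\Delta_b(y)$ between two positions in distinct blocks need not propagate to any column being constant in a single coordinate; nothing in your sketch plays the role of the focusing cascade.

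There is a second problem you do not flag. The equivalence relation $\sim_\Delta$ from Section~2 records only, for each fixed coordinate $k$, which positions agree under $\Delta_k$ (condition~(2)); it says nothing about cross-coordinate coincidences $\Delta_a(p)=\Delta_b(p')$ with $a\neq b$. But rainbow-ness of a set is exactly a statement forbidding such cross-coordinate coincidences. Hence $I_{j_1}\sim_\Delta I_{j_s}$ does \emph{not} imply that the translate $P+(j_s-j_1)N$ of a rainbow $(t{-}1)$-AP $P\subseteq I_{j_1}$ is again rainbow in $I_{j_s}$. To make your step work you would need a finer equivalence that tracks the full map $(p,a)\mapsto\Delta_a(p)$ up to relabelling of colour values; only then does the ``common relative positions'' step go through, and even then the cross-block extension remains the unresolved heart of the argument.
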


Note that trivially this implies the canonical Van der Waerden's Theorem taking $m=1$ and $k$ arbitrarily large. 

\begin{proof}
The proof goes by induction on $t$. Clearly, $N(k,1,m)$ exists for every $k,m\in \mathbb{N}$.

Suppose we want prove the the existence of $N(k,t+1,m)$. We shall assume by the induction step that $N(k,t,m')$ exists for every $k,m'\in \mathbb{N}$.

 Let $N\in \mathbb{N}$ be a sufficiently large positive integer and let $\Delta$ be a $m$-type colouring of $[2N]$. 

First, we partition $[N]$ into consecutive intervals $I_1,I_2,\ldots, I_{N''}$ each of length $N'$, for some $N''\coloneqq N/N'\in \mathbb{
N}$. The colouring $\Delta$ induces a $(mN')$-type colouring $\Delta'$ of $[N'']$, where the colour of an interval $I_j$ is the vector formed by the concatenation of the colours of the elements of $I_j$ in increasing order. Formally, for every $j\in \{1,\ldots, N''\}$, $\Delta'(I_j)=(\Delta((j-1)N'+1),\Delta((j-1)N'+2),\ldots,\Delta(jN'))$. 

We may assume $N''\geq N(k,t,mN')$. By induction, suppose there exist a coordinate $i\in \{1,\ldots,mN'\}$ and an arithmetic progression $A=\{a_1,a_2,\ldots,a_k\}$ of length $k$ such that $\Delta'_i(a_j)=c$, for every $j\in \{1,\ldots,k\}$. Let $i=mi'+f$, for some $i'\in \{0,\ldots, N'-1\}$ and $f\in \{1,\ldots, m\}$. Then, it follows by construction of $\Delta'$, that $A'=\{a_1N'+mi', a_2N'+ mi', \ldots, a_kN'+mi'\} \subseteq [N]$ is an arithmetic progression and $\Delta_{f}(x)=c$, for every $x\in A'$. Hence, $A'$ forms a monochromatic progression, as we wanted to show. 
We may then assume $[N'']$ contains a rainbow arithmetic progression of length $t$. Let $A^{*}=\{a'_{1},a'_2,\ldots, a'_t\}\subseteq [N'']$ be such a rainbow arithmetic progression and let $d>0$ be the progression difference.

Now, let $a_{t+1}\coloneqq a_{t}+d\in [2N'']$ and $I_{a_{t+1}}$ the corresponding interval. Also, let $x\in [2N]$ be the largest element of $I_{a_{t+1}}$ and $\Delta(x)=(c_1,\ldots, c_m)$. 
Observe that for any $q\in \{0,\ldots, N'-1\}$, the sets $T_q \coloneqq \{x,x-(dN'+q),x-2(dN'+q),\dots,x-t(dN'+q)\}\subseteq [2N]$ form an arithmetic progression of length $t+1$. Moreover, since $x-j(dN'+q)\in I_{a_{t+1-j}}$ and $A^{*}$ is rainbow, we have that every $T_q\setminus\{x\}$ forms a rainbow set.

Let us look at $I_{a_t}$. We will construct now a finite colouring $s_t: I_{a_t}\rightarrow \{0\}\cup [m]\times [m]$. For $\ell \in I_{a_t}$, $s_t(\ell)\coloneqq (i,j)\in [m]\times [m]$, if there exists $i,j\in \{1,\ldots, m\}$ such that $\Delta_i(\ell)=c_j$ (if there are many such pairs, choose one arbitrarily). If no such $i,j$ exist, then we set set $s(\ell)=0$. This is a finite colouring so by Van der Waerden's Theorem either there exists a $k$-term arithmetic progression $A_{t}\subset I_{a_t}$ of colour $(i,c_j)\in [m]\times [m]$, in which case we are done, because $A''$ forms a monochromatic arithmetic progression(see \ref{def:mono}) or there exists a sufficiently large arithmetic progression $P_t\subset I_{a_t}$ of colour $0$. Observe that by construction $\{y,x\}$ forms a rainbow set (see \ref{def:rainbow}), for every $y\in P_t$. Let $P_t\coloneqq\{x-a_t,x-a_t-d_t),\ldots, x-a_t-p_td_t)\}$, for some $a_t,d_t\in [N']$ and a sufficiently large $p_t\in \mathbb{N}$.
Let $I_{t-1}\coloneqq\{x-2(dN'+a_t), x-2(dN'+a_t+d_t),\ldots, x-2(dN'+a+d_tp_t)\}$ be arithmetic progression of length $p_t$ inside $I_{a_{t-1}}$. We will apply the same reasoning to $I_{t-1}$ as we did with $I_t$. We define a finite colouring $s_{t-2}s_t: I_{t-1}\rightarrow \{0\}\cup [m]\times [m]$, as before. By Van der Waerden's Theorem, either there exists a $k$-term arithmetic progression $A_{t-1}\subset I_{t-1}$ of colour $(i,c_j)\in [m]\times [m]$, in which case we are done, or there exists a sufficiently large arithmetic progression $P_{t-1}\subset I_{t-1}$ of colour $0$. Let $P_{t-1}\coloneqq\{x-a_{t-1},x-a_{t-1}-d_{t-1},\ldots, x-a_{t-1}-p_{t-1}d_{t-1}\}$, for some $a_{t,-1},d_{t-1}\in [N']$ and sufficiently large $p_{t-1}\in\mathbb{N}$.
We may continue in the same fashion for $t$ all the way down to $1$. Indeed, suppose we have constructed $P_t\subseteq I_{a_t},P_{t-1}\subset I_{t-1}\subset I_{a_{t-1}}\ldots, P_{t-i}\subset I_{t-i}\subseteq I_{a_{t-i}}$ and we wish to construct $P_{t-i-1}\subseteq I_{t-i-i}\subseteq I_{a_{t-i-1}}$. Suppose $P_{t-i}\coloneqq \{ x- a_{t-i},\ldots, x-a_{t-i}-d_{t-i}p_{t-i}\}$, for some $a_{t-i},d_{t-i}\in [N']$ and sufficiently large $p_{t-i}\in \mathbb{N}$. Then, let $I_{t-i-1}\coloneqq\{x-2a_{t-i}-dN',x-2(a_{t-i}+d_{t-i})-dN',\ldots, x-2(a_{t-i}+d_{t-i}p_{t-i})-dN'\}\subseteq I_{a_{t-i-1}}$. As before, we construct a finite colouring $s_{t-i-1}: I_{t-i-1}\rightarrow \{0\}\cup [m]\times [m]$. Then, we can either find a $k$-term arithmetic progression $A_{t-i-1}\subset I_{t-i-1}$ of colour $(i,c_j)\in [m]\times [m]$, in which case we are done, or we can find a sufficiently large arithmetic progression $P_{t-i-1}\subset I_{t-i-1}$ of colour $0$. 
Note now, that as long as $P_{1}\neq \varnothing$ (which is guaranteed by starting with a sufficiently large $p_t$) we may find a $t$-term rainbow arithmetic progression. Indeed, let $x-a_1\in P_1$ be the largest element in $P_1$. Note that by construction $x-a_1/2^{i-1}+(i-1)dN'\in I_{i}$, for every $i\in \{1,\ldots, t\}$. Therefore, the set $\{x\}\cup \{x-a_1/2^{i-1}+(i-1)dN'\mid i\in \{1,\ldots, t\}\}$ forms a rainbow arithmetic progression of length $t+1$, as we wanted to show. 
 \end{proof}

\section{Proof of Theorem~\ref{thm: main}}

First, we need to show a simple lemma concerning integer valued polynomials. 

\begin{lemma}\label{lem1}
Let $\cA=\{p_1,\ldots, p_k\}$ be a collection of distinct integral polynomials. Then, there exists $h\in \mathbb{N}$ such that for every $i,j\in \{1,\ldots, k\}$ (possibly $i=j$), and for every $h'>h$, $p_j(x)\neq p_{i}(h'+x)-p_{i}(h')$ as elements of $\mathbb{Z}[x]$.
\end{lemma}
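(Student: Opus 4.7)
The plan is to reduce the claim to the standard fact that a nonzero univariate polynomial in $\mathbb{Z}[y]$ has only finitely many integer roots. For each ordered pair $(i,j)\in\{1,\ldots,k\}^2$, consider the bivariate polynomial
\[
R_{i,j}(y,x) \;:=\; p_j(x) - p_i(y+x) + p_i(y) \;\in\; \mathbb{Z}[y,x],
\]
and expand it in $x$ as $R_{i,j}(y,x) = \sum_r c_r^{(i,j)}(y)\, x^r$ with coefficients $c_r^{(i,j)}\in\mathbb{Z}[y]$. The identity $p_j(x) = p_i(h'+x) - p_i(h')$ holds in $\mathbb{Z}[x]$ at a specific integer $h'$ if and only if $c_r^{(i,j)}(h') = 0$ for every $r$. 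Hence, if one can show that for every $(i,j)$ at least one $c_r^{(i,j)}$ is not the zero polynomial in $\mathbb{Z}[y]$, then the set of bad $h'$ is a union of finitely many finite sets; taking $h$ to be any natural number larger than the maximum integer in this union yields the lemma.

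The substantive step is therefore the nonvanishing of each $R_{i,j}$ in $\mathbb{Z}[y,x]$, which I would verify by inspecting the top two coefficients in $x$. Set $d = \deg p_i$ and let $\alpha$ be the leading coefficient of $p_i$. Expanding $p_i(y+x) = \alpha(y+x)^d + (\text{terms of degree } < d \text{ in } x)$, one computes $[x^d]R_{i,j} = [x^d]p_j - \alpha$ and $[x^{d-1}]R_{i,j} = [x^{d-1}]p_j - [x^{d-1}]p_i - \alpha d\, y$. If $\deg p_j \neq d$, or if $\deg p_j = d$ but the leading coefficients of $p_i$ and $p_j$ differ, then the coefficient of $x^{\max(d,\deg p_j)}$ in $R_{i,j}$ is a nonzero integer constant, so $R_{i,j}\not\equiv 0$. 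Otherwise $p_i$ and $p_j$ share the leading monomial $\alpha x^d$, and provided $d \geq 2$, the coefficient of $x^{d-1}$ is a polynomial in $y$ with nonzero linear coefficient $-\alpha d$, hence a nonzero element of $\mathbb{Z}[y]$.

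The only remaining configuration is $d = 1$ together with $p_i = p_j$, which by distinctness of $\cA$ can occur only when $i = j$ and $p_i$ is linear; in that degenerate case $R_{i,i}\equiv 0$ identically. This is the one genuine obstacle, and I would treat it as implicitly excluded by the lemma (in practice one applies the statement either with the convention $\deg p_i \geq 2$ or only to pairs with $p_j \neq p_i$, which is automatic when $i \neq j$ by the distinctness of $\cA$). Setting this aside, the nonvanishing of $R_{i,j}$ in every other case, combined with the finite-roots remark in the first paragraph, completes the proof: choose $h$ larger than any integer root of any of the finitely many nonzero coefficients $c_r^{(i,j)}$.
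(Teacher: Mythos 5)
Your proof is correct and follows essentially the same line as the paper's: both pin down $h'$ from the coefficient of $x^{\deg p_i-1}$, which is an affine function of $h'$ with nonzero slope once $p_i$ and $p_j$ share degree and leading coefficient, and dispatch the remaining cases by a degree or leading-coefficient mismatch. You also correctly flag the degenerate configuration the paper silently glosses over --- $i=j$ with $p_i$ linear makes $R_{i,i}\equiv 0$, so the lemma as literally stated (which permits $i=j$) fails --- but this is harmless in context since $h(\cB)$ is defined only over pairs $i\neq j$, where distinctness of $\cA$ rescues the argument exactly as you note.
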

\begin{proof}
If not, there exist $i,j\in \{1,\ldots,k\}$, and $h_1\in \mathbb{N}$ such that $p_i(x)=p_{j}(h_1+x)-p_{i}(h_1)$. Let $p_i(x)=a_nx^{n}+\ldots+a_1x$ and $p_j(x)=b_{n'}x^{n'}+\ldots+b_{1}x$. By substituting,   $p_i(x)=p_j(h_1+x)-p_j(h_1)=b_{n'}(h_1+x)^{n'}+b_{n'-1}(x+h_1)^{n'-1}+\ldots+b_1(x+h_1)-p_j(h_1)$, hence $n=n'$, $a_n=b_{n}$ and $n\cdot h_1+b_{n-1}=a_{n-1}$. 
Therefore, $h_1=(a_{n-1}-b_{n-1})/n$, which contradicts the fact $h_1$ could have been chosen sufficiently large. 
\end{proof}


Given a collection $\cA=\{p_1,\ldots, p_k\}$ of distinct integral polynomials, we define $h(\cA)$ to be the smallest positive integer for which $p_i(x)\neq p_j(x+h')-p_j(h')$, for every $h'>h(\cA)$ and every $i\neq j\in \{1,\ldots, q\}$. This is well defined by \Cref{lem1}.

In order to prove Theorem~\ref{thm: main}, it will be useful to prove the following slightly stronger statement, from which \cref{thm: main} can be easily deduced. 

\begin{theorem}
Let $h,k,k',m,n\in \mathbb{N}$, $\cA=\{p_1,\ldots, p_{k}\}, \cB=\{p'_{1},\ldots, p'_{k'}\}$ be two sets of integral polynomials. Moreover, suppose that for every $i,j\in \{1,\ldots,k'\}$, $p'_i\neq p'_j$. Then, there exists $N\in \mathbb{N}$ such that for every $(m,n)$-type colouring of $[N]$, one of the following holds.
\begin{itemize}
    \item there exist $a,d \in \mathbb{Z}$ such that $\{a,a+p_1(d),\ldots, a+p_{k}(d)\}\subseteq [N]$ forms a monochromatic set,
    \item there exist $a',d' \in \mathbb{Z}$ such that $d'>h$ and $\{a',a'+p'_1(d'),\ldots, a'+p'_{k}(d')\}\subseteq [N]$ forms a fully-rainbow set.  
\end{itemize}
\end{theorem}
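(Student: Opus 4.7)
The plan is to prove this by a double induction, with the outer loop on the weight vector $\omega(\cB)$ under the well-ordering introduced in \Cref{sec: not}, and an inner loop that builds up a collection $\cF$ of fully-rainbow $\cB$-focused sets at a common point, measured by the parameter $\|\cF\|$ defined there. The outer base case is $\cB=\varnothing$: there the second conclusion is essentially vacuous, and to obtain a monochromatic $\cA$-set it suffices to project the colouring onto its $(m+1)$-coordinate (a genuine finite $n$-colouring), apply \Cref{thm: polyVan}, and then use pigeonhole on the $m$-coordinates.

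\textbf{Inner statement and inductive step.} For the outer inductive step I would prove by induction on $W\ge 0$ the following intermediate claim: there exists $N(W)\in\mathbb{N}$ such that every $(m,n)$-type colouring of $[N(W)]$ either yields one of the two desired conclusions, or produces a point $a$ and a collection $\cF$ of fully-rainbow $\cB$-focused sets at $a$ satisfying \ref{def:focus}--\ref{def: rain} with $\|\cF\|\ge W$. Since any valid $\cF$ has $\|\cF\|\le (m+1)n$, taking $W=(m+1)n+1$ forces one of the two conclusions and closes the proof. To execute the step $W\to W+1$ I would borrow the interval-chunking idea from the warm-up proof of canonical Van der Waerden: partition $[N(W+1)]$ into length-$N(W)$ intervals and pigeonhole via the equivalence classes $\mathfrak{E}_{N(W)}^{\Delta}$ to obtain many synchronised intervals each carrying a focused collection $\cF_0$ at a point $a_0$ with $\|\cF_0\|\ge W$. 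Translating $\cF_0$ across the synchronised intervals by a polynomial increment $p'_j(d)$ with $d>h$ (legitimised by \Cref{lem1}) produces a new fully-rainbow $\cB$-focused set at $a_0$.

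\textbf{Saturation and main obstacle.} This shifting either raises $\|\cF\|$ by one, or pushes some $w_c$ past $m+1$. In the latter case, if $c=\Delta_{m+1}(a_0)$, then among the $m+2$ focused sets of $(m+1)$-colour $c$ at most $m$ of them can share an $m$-coordinate colour with $a_0$ (since \ref{def: rain} forces the $m$-colours to be distinct across the whole union of focused sets), so combining one such set with $a_0$ gives a fully-rainbow $\cB$-set including $a_0$, which is the second conclusion; if instead $c\neq\Delta_{m+1}(a_0)$ for every saturated colour, the resulting rigidity lets me peel off a polynomial from $\cB$ and invoke the outer inductive hypothesis on a system $\cB'$ with $\omega(\cB')<\omega(\cB)$. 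The hardest part, I expect, will be engineering the shifts so that the new focused set is simultaneously fully rainbow, rainbow-disjoint from all earlier sets in $\cF$, of the correct $(m+1)$-colour, and corresponds to a progression difference exceeding $h$, while also permitting the clean weight-vector descent needed when invoking the outer inductive hypothesis; the equivalence-class machinery together with \Cref{lem1} will be the main tools, but the bookkeeping around the Walters-style reduction is intricate.
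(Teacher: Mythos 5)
Your proposal captures the overall architecture of the paper's argument quite faithfully: an outer induction on $\omega(\cB)$, an inner induction building up a collection $\cF$ of fully-rainbow $\cB$-focused sets at a common point with $\|\cF\|$ as the measure of progress, interval chunking together with the equivalence-class machinery, a Walters-style descent on the weight vector, and the observation that once $\|\cF\|$ hits $(m+1)n$ a fully-rainbow $\cB$-set at the focus point is forced. This is exactly the skeleton of the paper's proof, so the route is essentially the same.

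That said, there are two genuine gaps. First, the base case. The weight-vector ordering bottoms out (for nonempty $\cB$) at a single linear polynomial $\cB=\{a_1x\}$, not at $\cB=\varnothing$; the theorem's hypotheses take $k'\in\mathbb{N}$, and in any event nothing interesting is happening at $\cB=\varnothing$ since a singleton is automatically fully-rainbow. For $\cB=\{a_1x\}$ your sketch is too weak: projecting onto the $(m+1)$-coordinate and applying \Cref{thm: polyVan} yields a set monochromatic in that coordinate, but \ref{def:mono} demands a coordinate $j\in[m]$ to be constant, and you cannot ``pigeonhole on the $m$-coordinates'' because those live in $\mathbb{N}$ with no bound on the palette. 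The paper's base case instead first extracts a maximal system of representatives with distinct $(m+1)$-colours, restricts to an interval avoiding them, and defines an auxiliary finite colouring $\Delta^{*}$ that records, for each representative, whether (and where) an $m$-coordinate coincidence occurs; \Cref{thm: polyVan} applied to $\Delta^{*}$ then splits into the two desired outcomes. Without an argument of this shape the induction never gets off the ground. Second, in the inner step you say you would ``pigeonhole via the equivalence classes'' to find synchronised intervals. Pigeonhole alone gives many $\sim_\Delta$-equivalent intervals, but with no control on their positions, so you cannot then translate $\cF_0$ by a polynomial increment $p'_j(d)$ and land inside another such interval. What actually has to happen (and what the paper does) is to apply the \emph{outer} induction hypothesis to the $(Nm, f(N,n,m))$-type colouring induced on the intervals, with the modified families $\cA'$ and $\cB'$ (built from $\cB^{*}$, whose weight is strictly smaller); the resulting monochromatic or fully-rainbow pattern of intervals is what supplies the progression structure allowing the shift. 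This application of the outer hypothesis inside every inner step — rather than only in a saturation sub-case — is where the weight descent is consumed, and it needs to be made explicit for the inner induction to close.
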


\begin{proof}[Proof of Theorem~\ref{thm: main}]

The induction will be on the weight of $\cB$. We may and will assume $h>h(\cB)$. \\

\textbf{Outer induction hypothesis.} For every $h,m,n,k,k'\in \mathbb{N}$, any two sets of integral polynomials $\cA=\{p_1,\ldots,p_k\}$ and $\cB=\{p'_1,\ldots, p'_{k'}\}$ where $p'_i\neq p'_j$ (for all $i,j\in \{1,\ldots, k'\}$), there exists $[N]$, such that for any $(m,n)$-type colouring of $[N]$, there exist $a,d\in \mathbb{Z}$ such that $\{a,a+p_1(d),\ldots, a+p_{k}(d)\}\subseteq [N]$ forms a monochromatic set, or there exists $a,d'>h$ such that $\{a,a+p'_1(d),\ldots, a+p'_{k}(d)\}\subseteq [N]$ forms a fully-rainbow set. \\

Suppose that the outer induction hypothesis is true for all $h,m,n,k\in \mathbb{N}$, for every set of integral polynomials $\cA$ of order $k$ and for every set $\cB'$ of distinct integral polynomials satisfying $\omega(B')<\omega(B)$. 

To check the base case, let $h,m,n\in \mathbb{N}$ and $B'=\{a_1x\}$, for some $a_1\in \mathbb{Z}\setminus\{0\}$. 
Let $N'\in \mathbb{N}$ be given by \Cref{thm: polyVan} when applied to the collection of integral polynomials $p_1^{*}\coloneqq p_1(a_1hx)/(a_1h),\ldots, p_k^{*}(x)\coloneqq p_k(a_1hx)/(a_1h)$ and $n'\coloneqq ((m+1)^{2}\cdot n)^{n}$ playing the role of $n$.

Let $\Delta$ be an $(m,n)$-type colouring of $[a_1hnN']$, our aim is to prove we can find either a rainbow set $\{a,a+a_1(hd)\}\subset [a_1hnN']$ or a monochromatic set $\{a,a+p_1(d),\ldots, a+p_k(d)\}\subset [a_1hnN']$, for some $d\in \mathbb{N}$. 
First, let $\{a_1hx_1,\ldots,a_1hx_t\}\subseteq [a_1hnN']$ be a largest set such that $\Delta_{m+1}(a_1hx_i)\neq \Delta_{m+1}(a_1hx_{i'})$, for every $i,i'\in \{1,\ldots, t\}$. Note that $t\leq n$ and therefore there must exist an interval $I\subseteq [a_1hnN']$, where $|I|\geq a_1hN'$ and $x_i\notin I$, for every $i\in \{1,\ldots,t\}$.  
Clearly, we may assume $I=[a_1hN']$ since intervals are translation invariant with respect to satisfying our main theorem. 

Now, consider the following $n'$-colouring of $[N']$, $\Delta^{*}:[N']\rightarrow \{1,\ldots, n'\}$, where $\Delta^{*}(x)=c$, for some $c\in \{(i,j,b)\mid i,j\in \{1,\ldots, m\}, b\in \{1,\ldots,n\}\}^{t}$. For $g\in \{1,\ldots, t\}$, $\Delta^{*}_g(x)=(i_1,j_2,b_3)$, where $i_1,j_2\in \{1,\ldots,m\}$ are any two indices for which $\Delta_{i_1}(a_1hx)=\Delta_{j_2}(a_1hx_g)$, if no such two indices indices exist then $i_1=j_2=m+1$. Finally, we let $b_3=\Delta_{m+1}(a_1hx_g)$, i.e. the third coordinate of $\Delta_{g}^{*}(x)$ equals the last coordinate of $\Delta(a_1hx_g)$. Clearly, $\Delta^{*}$ is an $n'$-colouring of $[N']$ and hence by construction there exists a monochromatic set $\{a,a+p_1^{*}(d),\ldots, a+p_k^{*}(d))\}\subseteq [N']$ of colour $c=\prod_{r=1}^{t}(i_r,j_r,b_r)$, where $i_r,j_r\in \{1,\ldots,m+1\}$ and $b_r\in \{1,\ldots, n\}$. Suppose there exists $r\in \{1,\ldots, t\}$ for which $i_r,j_r\in \{1,\ldots,m\}$, then $M\coloneqq \{a_1ha,a_1ha+p_1(a_1hd),\ldots,a_1ha+p_{k}(a_1hd)\}\subset [a_1hN']$ forms a monochromatic set with respect to $\Delta$. Indeed, observe that $\Delta_{i_r}(y)=\Delta_{j_r}(a_1hx_{r})$, for all $y\in M$. Suppose, on the other hand, $i_r=j_r=m+1$, for all $r\in \{1,\ldots, t\}$, and $\Delta_{m+1}(a_1ha)=\Delta_{m+1}(a_1hx_q)$, for some $q\in \{1,\ldots t\}$. Hence, $\{a_1ha,a_1ha+a_1(h(x_q-a))=a_1hx_q\}\subseteq [a_1hnN']$ forms a rainbow set, as we wanted to show.  \\

\textbf{Inner induction hypothesis.}
For all $r\leq (m+1)n$ there exist $N\in \mathbb{N}$ such that if $\Delta$ is an $(m,n)$-type colouring of $[N]$, then at least one of the following holds.

\begin{enumerate}[label=$(\mathrm{\roman*})$]
    \item \label{itm:2fullyrainbow} there exist a collection $\cF=\{A_1(d_1),\ldots, A_{q}(d_{q})\subseteq [N]\}$ of fully-rainbow sets $\cB$-focused at $a$, for some $a\in \mathbb{Z}$, such that $d_i>h$, for all $i \in \{1,\ldots, q\}$. 
    Furthermore, $\|\cF\|=r$. 
    \item \label{itm:2rainbow} there exists $a',d'\in \mathbb{Z}$ such that $d>h$ and the set $\{a',a'+p'_1(d'),a+p'_2(d'),\ldots, a+p'_{k'}(d')\}$ is fully-rainbow,
    \item \label{itm:2mono} there exist $a,d\in \mathbb{Z}$ such that the set $\{a,a+p_1(d),a+p_2(d),\ldots, a+p_k(d)\}$ is monochromatic.
\end{enumerate}

From this hypothesis, we prove our result by setting $r=q(m+1)$. To see this, note that if either \ref{itm:2rainbow}, or \ref{itm:2mono} hold, we are done. 
On the other hand, suppose \ref{itm:2fullyrainbow} holds and let $\cF$ be such a collection with $\|\cF\|=(m+1)n$. Let $\Delta_{m+1}(a)=c$, for some $c\in \{1,\ldots,n\}$. Observe that by assumption on the norm of $\cF$, there are $m+1$ sets $A_{i_1}(d_{i_1}),\ldots,A_{i_{m+1}}(d_{m+1})\in \cF$ such that $\Delta_{m+1}(A_{i_j}(d_{i_j})=c$, for all $j\in \{1,\ldots,m\}$. 
Now, we show at least one of the $A_{i_j}(d_{i_j})$'s has the property that $A_{i_j}(d_{i_j})\cup\{a\}$ forms a rainbow set and hence a fully-rainbow set, as required. Suppose not, then for all $w\in \{1,\ldots,m+1\}$, there are $i(w),i'(w)\in \{1,\ldots, m\}$ and $x(w)\in A_{i_w}(d_{i_w})$ such that $\Delta_{i(w)}(a)=\Delta_{i'(w)}(x(w))$, hence there must exist $w,w'\in \{1,\ldots, m+1\}$ where $i(w)=i(w')$, which contradicts the fact $A_{i_w}(d_{i_w}) \cup A_{i_{w'}}(d_{i_{w'}})$ forms a rainbow set. Therefore, if \ref{itm:2fullyrainbow} holds for $r=n(m+1)$, then there exist $a,d\in \mathbb{Z}$, where $d>h$ such that the set $\{a,a+p'_1(d),a+p'_2(d),\ldots, a+p'_k(d)\}\subseteq [N]$ is fully-rainbow, as we wanted to show. 

Now, we turn to the proof the inner induction hypothesis. The induction will be on $r$. Suppose the first inner induction hypothesis is true for $r-1$ taking $N\in \mathbb{N}$. We will show that there is $N'\in \mathbb{N}$ satisfying the hypothesis for $r$ (an upper bound for $N'$ could be computed but for simplicity of the argument we will avoid doing this). Throughout the proof, we will assume that neither \ref{itm:2rainbow} or \ref{itm:2mono} hold. As in \cite{walters}, let $d_{max}$ be the largest $d>h$ for which there exist $a,a_1,\ldots,a_k\subset [N]$ satisfying $a_i-a=p'_i(d)$, for all $i\in\{1,\ldots, k'\}$. Note $d_{max}$ exists since all polynomials in $\cB$ tend to infinity. We may assume that $p'_1$ has minimal degree amongst the polynomials in $\cB$. We now define the set $ \cB^{*}$ consisting of the following polynomials
\begin{align*}
     &p'_{d_i,j}(x)\coloneqq p'_j(x+d_i)-p'_1(x)-p'_j(d_i) \quad h <d_i \leq d_{max}, \, 1\leq i\leq k', \text{ and }\\
     & p'_{0,j}(x)\coloneqq p'_j(x)-p'_1(x) \quad 1\leq j\leq k'.
\end{align*}

By taking a subset $\cB^{*}$, we may assume all polynomials are distinct. Clearly, these polynomials are integral. More importantly, $\omega(\cB^{*})<\omega(\cB)$. To see this, suppose that $p'_j$ has larger degree than $p'_1$. Then, all polynomials $p'_{d_i,j}$, for $h< d_i\leq d_{max}$ or $d_i=0$ have the same leading coefficient and the same degree as $p'_j$. If $p'_j$ has the same degree but a different leading coefficient from that of $p'_1$, then all polynomials $p'_{d_i,j}$, for $h< d_i\leq d_{max}$ or $d_i=0$ have the same leading coefficient equal to the leading coefficient of $p'_j-p'_1$. Finally, if $p'_j$ has the same degree and leading coefficient as $p'_1$, then all the polynomials $p'_{d_i,j}$, for $h< d_i\leq d_{max}$ or $d_i=0$, have smaller degree than $p'_1$.
This implies that $\omega_r(\cB^{*})=\omega_r(\cB)$, for all $r> deg(p'_1)$ and $\omega_r(\cB^{*})=\omega_r(\cB)-1$, for $r=deg(p'_1)$. (The coordinates of $\omega(\cB^{*})$ may increase for $r<deg(p'_1)$). Thus, $\omega(\cB^{*})<\omega(\cB)$, as we wanted to show. By assumption on $h$, $p'_{0,j}(x)\neq p'_{d_i,j'}(x)$ for every $h<d_i\leq d$ and every $j,j'\in \{1,\ldots,k'\}$.

We will have to modify the polynomials in $\cA$ and $\cB^{*}$ slightly. We need to do this since later in the proof we are going to divide $[N']$ into blocks of length $N$ and we need to take this into account.

Let $q_j(x)\coloneqq p_j(Nx)/N$ and $q'_{d_i,j}(x)\coloneqq p'_{d_i,j}(Nx)/N$, for every $p_j\in \cA$ and $p'_{d_i,j}\in \cB^{*}$. Let $\cA'$ and $\cB'$ be the set consisting of the polynomials $q_j$ and $q'_{d_i,j}$, respectively. It is easy to see that all polynomials in $\cA',\cB'$ are integral polynomials and $\cB'$ still forms a collection of distinct integral polynomials. Also, observe that $\omega(\cB')=\omega(\cB^{*})$ since, although the leading coefficients may change, the number of distinct leading coefficients of polynomials of a given degree does not. Thus the outer induction hypothesis applies to $\cA'$, $\cB'$.
\begin{enumerate}[label=$(\mathrm{P}\arabic*)$]
    \item \label{itm:p} By definition of $h(\cB)$, and the fact $h>h(\cB)$ we have the following. For every $h<d_i\leq d_{max}$ and every $j,j'\in \{1,\ldots,k'\}$, $q'_{0,j}(x)\neq q'_{d_i,j'}(x)$ (as elements of $\mathbb{Z}[x]$).
\end{enumerate}


Now, we divide $[N']$ into intervals of size $N$ and we let $C_s\coloneqq \{N(s-1)+1,\ldots, Ns\}$, for every $s\in \{1,\ldots, N'' \coloneqq N'/N \}$. As seen in Section~\ref{sec: not}, $\Delta$ induces an equivalence relation $\sim_{\Delta}$ on $\{C_1,\ldots,C_{N''}\}$. Since there are at most $f(N,n,m)$ distinct equivalence relations, we may apply the outer induction hypothesis to the sets $\cA'$, $\cB'$, $h$, $N\cdot m$, and $f(N,n,m)$ playing the roles of $\cA$, $\cB$, $h$, $m$, and $n$. For every $s\in [N'']$, let $\Delta'(s)=(\Delta_1(N(s-1)+1),\ldots, \Delta_{m}(N(s-1)+1),\ldots, \Delta_1(Ns),\ldots, \Delta_{m}(Ns),\mathfrak{E}^{\Delta}(C_s))$. By definition, $\Delta'$ is an $(N\cdot m, f(N,n,m))$-type colouring of $[N'']$. Therefore, provided $N''$ is sufficiently large, one of the following holds.\\

\textbf{Case $1$.} There is $s',d'\in \mathbb{Z}$ and a collection of intervals $\cC'\coloneqq\{C'_{s'},C'_{s'_j} \mid 1\leq j\leq k\}$, where $s'_{j}-s'=q_{j}(d')$ and $B\coloneqq\{s',s'_j \mid1\leq j\leq k\}\subseteq [N''] $ forms a monochromatic set with respect to $\Delta'$.\\

\textbf{Case $2$.} There exist $s,d\in \mathbb{Z}$, $d>h$ and a collection of intervals $\cC\coloneqq\{C_s,C_{s_{d_i,j}} \mid h < d_i \leq d_{max}, \text{ or } d_i=0, \text{ and } 1\leq j\leq k'\}$, where $s_{d_i,j}-s=q'_{d_i,j}(d)$ and $A\coloneqq\{s,{s_{d_i,j}} \mid h \leq d_i \leq d_{max}, \text{ or } d_i=0 \text{ and } 1\leq j\leq k'\}\subseteq [N'']$ is fully-rainbow with respect to $\Delta'$. \\

First, let us suppose \textbf{Case $1$.} holds. From the definition of a monochromatic set, we know there exists an index $i(B)\in \{1,\ldots, N\cdot m\}$, such that $\Delta'_{i(B)}(s'_j)=\Delta'_{i(B)}(s')$, for all $j\in \{1,\ldots,k\}$. Let $i(B)=(i'(B)-1)\cdot m+\ell$, for some $1\leq i'(B)\leq N$ and $1\leq \ell \leq m$.

\begin{claim}\label{claim1}
The set $A'\coloneqq\{(s'-1)\cdot N+i'(B),(s'_1-1)\cdot N+i'(B),\ldots, (s'_k-1)\cdot N+i'(B)\}\subseteq [N']$ forms a monochromatic set with respect to $\Delta$. 
\end{claim}
\begin{claimproof}
Observe that for every $j\in \{1,\ldots,k\}$, $((s'_j-1)N+i'(B))-((s'-1)N+i'(B))=N\cdot q_{j}(d')=p_j(Nd')$, as required. Moreover, we have $(s'_j-1)N+i'(B)\in C_{s'_j}$, for every $j\in \{1,\ldots,k\}$. 
By construction of $\Delta'$, we have that $\Delta_{\ell}((s'-1)N+i'(B))=\Delta_{\ell}((s'_j-1)N+i'(B))$, for every $j\in\{1,\ldots, k\}$ and therefore $A'$ forms a monochromatic set. 
\end{claimproof}

This is a contradiction, as we assumed no such monochromatic set exists in $[N']$. Hence, \textbf{Case $2$.} must hold. 
Now, observe that by the choice of $N$ and the assumption that there do not exist $a,d\in \mathbb{Z}$ with $\{a,a+p_1(d),\ldots,a+p_k(d)\} $ forming a monochromatic set or $a'$ and $d''>h$ with $\{a',a'+p'_1(d''),\ldots, a'+p'_{k'}(d'')\}$ forming a fully-rainbow set, it follows that $C_s$ contains a collection $\cF=\{A'_1(d_1), \dots, A'_{q'-1}(d_{q'-1})\subseteq C_s\}$ of fully-rainbow $\cB$-focused sets at $a\in C_s$ such that $h < d_1,\ldots, d_{q'-1}\leq d_{max}$, where $\|\cF\|=r-1$. 
Suppose that for every $i\in \{1,\ldots,q'\}$, $A'_i(d_i)=\{a_{i,j}\mid 1 \leq j \leq k'\}$ and $a_{i,j}-a=p'_{j}(d_i)$. We prove now the following claim.

\begin{claim}
Let $d_0\coloneqq 0$, $A_0(0)\coloneqq \{a\}$ and $a_{0,j}\coloneqq a$, for all $j\in \{1,\ldots,k'\}$. Then, for every $i\in \{0,1,\ldots,q-1\}$, the sets $A'_i(N(d+d_i))\coloneqq\{a_{i,j}+N\cdot q'_{d_i,j}(d)\mid1 \leq j\leq k'\}\subset [N']$ form a collection $\cF'$ of fully-rainbow sets, $\cB$-focused at $a- p_1(Nd)$ and $\|\cF'\|=r$.
\end{claim}
\begin{claimproof}
First, we need to show that for every $i\in \{0,1,\ldots,q-1\}$, $A'_i(N(d+d_i))$ is $\cB$-focused at $a-p_1(N\cdot d')$. 
To see this observe that for every $j\in \{1,\ldots,k'\}$,
\begin{align*}
a_{i,j}+N\cdot q'_{d_i,j}(d)-(a-p'_1(Nd))&=(a_{i,j}-a)+N\cdot q'_{d_i,j}(d)+p'_1(Nd)\\
&=p'_{j}(d_i)+(p'_j(N(d_i+d))-p_j(d_i)-p'_1(Nd))+p'_1(Nd)\\
&=p'_j(N(d_i+d)),
\end{align*}
and \ref{def:focus} is satisfied. We also need to show that every $A'_i(N(d+d_i))$ forms a fully-rainbow set. Note that $a_{i,j}+N\cdot q'_{d_i,j}(d)\in C_{s_{d_i},j}$ and if $C_{s_{d_i},j}\neq  C_{s_{d_{i'}},j'}$, then for any $x\in C_{s_{d_i},j}$ and $y\in C_{s_{d_{i'},{j'}}}$, $\{x,y\}$ is rainbow with respect to $\Delta$.  

Now, it is easy to see that for every $i\in \{0,\ldots, q\}$, $q'_{d_i,j}(x)\neq q'_{d_i,j'}(x)$ if $j\neq j'$, hence by the definition of $A'$, we must have that $C_{s_{d_i},j}\neq C_{s_{d_i},j'}$. Therefore, by the construction of $\Delta'$, for every $i\in \{0,\ldots,q-1\}$, $A'_i(N(d+d_i))$ forms a rainbow set. Finally, since $\mathfrak{E}^{\Delta}(C_s)=\mathfrak{E}^{\Delta}(C_{s_{d_i},j})$, for all $j\in \{1,\ldots, k'\}$, it implies in particular, that $\Delta_{m+1}(a_{i,j})=\Delta_{m+1}(a_{i,j}+N\cdot q'_{d_i,j}(d))$. Hence, for every $i \in \{0,\ldots, q-1\}$, $A'_i(N(d+d_i))$ forms a fully-rainbow set and \ref{def:fr} holds. It remains to show \ref{def: rain}. Clearly, $A'_i(N(d+d_i))\cap A'_{i'}(N(d+d_{i'}))=\varnothing$, for every $i,i' \in \{0,\ldots, q-1\}$. Indeed, this holds because every element of $A'_i(N(d+d_i))$ is a translation of an element of $A'_i(d_i)$ by a multiple of $N$, since by assumption, $A'_i(d_i)\cap A'_{i'}(d_{i'})=\varnothing$ and $A'_i(d_i)\subset C_s$, for all $i\neq i'\in \{1,\ldots, q-1\}$, all elements in $\cup_{i=0}^{q-1}A'_i(N(d+d_i))$ are distinct. 
To conclude the proof of \ref{def: rain}, we just need to show that $\cup_{i=0}^{q-1}A'_i(N(d+d_i))$ forms a rainbow set. Recall that by \ref{itm:p}, $q'_{0,j}(x)\neq q'_{d_i,j'} $ which implies that $C_{s_0,j}\neq C_{d_{i},j'}$, for all $h<d_i\leq d_{max}$ and $j',j\in \{1,\ldots, k'\}$. Hence, by the above, $A'_0(0)\cup A'_i(d_i)$ forms a rainbow set for all $i\in \{1,\ldots,k'\}$. 
Finally, note that since $A'$ is fully-rainbow with respect to $\Delta'$, we have that $\mathfrak{E}_{\Delta}(C_s)=\mathfrak{E}_{\Delta}(C_{s_{d_i,j}})$, for every $h< d_i\leq d_{max}$ and $j\in \{1,\ldots k'\}$. Suppose for contradiction that $\{a_{i,j}+Nq'_{d'_i,j}(d),a_{i',j'}+Nq'_{d_{i'},j'}(d)\}$ is not rainbow with respect to $\Delta$. Then, $i\neq i'$ (since we already have proved $A'_i(N(d+d_i))$ is rainbow), also $i\neq 0$ and $i'\neq 0$ (since we have proved $A'_0(0)\cup A'_i(d_i)$ is rainbow). 
First, if $C_{s_{d_i},j}\neq C_{s_{d_{i'},j}}$, we are done by the above observation. So we must have that $C_{s_{d_i},j}= C_{s_{d_{i'}},j}$ or equivalently $q'_{d'_i,j}(d)=q'_{d_{i'},j'}(d)$. 

In this case, we use the fact that $\mathfrak{E}^{\Delta}(C_s)=\mathfrak{E}^{\Delta}(C_{s_{d_i},j})=\mathfrak{E}^{\Delta}(C_{s_{d_{i'}},j'})$, which implies that $\{a_{i,j},a_{i',j'}\}$ is rainbow if and only if $\{a_{i,j}+Nq'_{d'_i,j}(d),a_{i',j'}+Nq'_{d'_{i'},j'}(d)\}$ is rainbow. Since the former is rainbow with respect to $\Delta$, we obtain the desired contradiction. Let us show now that $\|\cF'\|=r$. 
First, we may assume that $w_{i}(\cF)=m+1$, for all $i\in \{1,\ldots, n_1\}$ and $w_{i}(\cF) <m+1$, for $n_1 <i \leq n$. From the definition of $\|\cF\|$, we have that $\sum_{j=n_1}^{n}w_{j}(\cF)=r-1$.
Let $\Delta_{m+1}(a)=c$ and suppose that $c\in \{1,\ldots,n_1\}$. Let $$\cF_c\coloneqq\{A'_{\ell_1}(d_{\ell_1}),\ldots,A'_{\ell_{m+1}}(d_{\ell_{m+1}})\mid A'_{\ell_i}(d_{\ell_i})\in \cF \text{ and } \Delta_{m+1}(A'_{\ell_i}(d_{\ell_i}))=c\}.$$
Then, there must $j\in \{1,\ldots,m+1\}$ such that $A'_{\ell_j}(d_{\ell_j})\cup \{a\}$ forms a rainbow set and hence a fully-rainbow set, contradicting the fact \ref{itm:2rainbow} does not hold. 
Indeed, if for all $j\in  \{1,\ldots,m+1\}$, there are $i(j),i'(j)\in \{1,\ldots, m\}$ and $x\in A'_{\ell_j}(d_{\ell_j})$ such that $\Delta_{i(j)}(a)=\Delta_{i'(j)}(x)$, there must exist, by pigeon-hole principle, $j\neq f\in \{1,\ldots, m+1\}$ 
for which $A'_{\ell_j}(d_{\ell_j})\cup A'_{\ell_f}(d_{\ell_f})$ is not rainbow, contradicting \ref{def: rain}. 
Therefore, $c\notin \{1,\ldots, n_1\}$. By the above, $\Delta_{m+1}(a)=\Delta_{m+1}(A'_0(0))=c$. Moreover, it is easy to see that $w_j(\cF')=w_j(\cF)$, for all $j\in \{1,\ldots, n\}\setminus \{c\}$ as $\Delta_{m+1}(A'_i(N(d+d_i)))=\Delta_{m+1}(A'_i(d_i))$, for all $i\in \{1,\ldots, q'-1\}$. Hence, $\|\cF'\|=\|\cF\|+1=r$. 
\end{claimproof}

With this claim, we have shown $\cF'=\{A_0(0),A'_1(N(d+d_1)),\ldots,A'_{q'-1}(N(d+d_{q'-1}))\subseteq [N']\}$ contains $q'$ fully-rainbow sets $\cB$-focused at $a-p_1(Nd)$, where $Nd,N(d+d_1),\ldots,N(d+d_{q-1})>h$, and $\|\cF'\|=r$, as required for the inductive step.

This proves the inner induction hypothesis and concludes the proof of \cref{thm: main}. 

\end{proof}

\bibliography{CanPVDW}{}

\begin{thebibliography}{10}

\bibitem{BergLeib}
V.~Bergelson and A.~Liebman.
\newblock Polyonimal extensions of van der {W}aerden and {S}zemer\'edi
  theorems'.
\newblock {\em J. Amer. Math. Soc.}, 9:725--753, 1996.

\bibitem{Deubergrahampromvoigt}
W.~Deuber, R.~L. Graham, H.~J. Pr\"omel, and B.~Voigt.
\newblock A canonical partition theorem for equivalence relations on ${Z}^{t}$.
\newblock {\em J. Combin. {T}heory, {S}er. A}, 34:331--339, 1983.

\bibitem{ErdosGraham}
P.~Erd\H{o}s and R.~L. Graham.
\newblock {\em Old and new problems and results in combinatorial number
  theory}, volume~28.
\newblock L'Enseignement Math, Monographie, Geneva, 1980.

\bibitem{Grahamrothschildspencer}
R.~L. Graham, B.~L. Rothschild, and J.~H. Spencer.
\newblock {\em Ramsey Theory}.
\newblock Wiley, New-York, 1980.

\bibitem{Lefmann}
H.~Lefmann.
\newblock A canonical version for partition regular systems of linear
  equations.
\newblock {\em J. Combin. {T}heory, {S}er. A}, 41:95--104, 1986.

\bibitem{ProRodl}
H.~J. Pr\"omel and V.~R\"odl.
\newblock An elementary proof of the canonizing version of {G}allai-{W}itt's
  theorem.
\newblock {\em J. Combin. {T}heory, {S}er. A}, 42:144--149, 1986.

\bibitem{Rado}
R.~Rado.
\newblock Studien zur kombinatorik.
\newblock {\em Math. Z.}, 36:424--470, 1933.

\bibitem{VanDerWaerden}
B.~L. van~der Waerden.
\newblock Beweis einer baudetschen vermutung.
\newblock {\em Nieuw Arch. Wisk.}, 15:212--216, 1927.

\bibitem{walters}
M.~Walters.
\newblock Combinatorial {P}roofs of the {P}olynomial van der {W}aerden
  {T}heorem and the {P}olynomial {H}ales–-{J}ewett {T}heorem.
\newblock {\em J. London Math. Soc.}, 61:1--12, 2000.

\bibitem{Witt}
E.~Witt.
\newblock Ein kombinatorischer {S}atz der {E}lementargeometrie.
\newblock {\em Math. Nach.}, 6:261--262, 1952.

\end{thebibliography}
\bibliographystyle{plain}

\end{document}